\theoremstyle{plain}
\newtheorem{master}{Master}[section]
\newtheorem{thm}[master]{Theorem}
\newtheorem{cor}[master]{Corollary}
\newtheorem{claim}[master]{Claim}
\theoremstyle{definition}
\newtheorem{defin}[master]{Definition}
\theoremstyle{remark}
\numberwithin{equation}{section}
\newcommand{\Ur}{\mathbb{U}}
\newcommand{\Rea}{\mathbb{R}}
\newcommand{\Nat}{\mathbb{N}}
\newcommand{\Int}{\mathbb{Z}}
\newcommand{\Rat}{\mathbb{Q}}
\begin{document}
\title[Non-universality of certain automorphism groups]{Non-universality of automorphism groups of uncountable ultrahomogeneous structures}
\author[M. Doucha]{Michal Doucha}
\address{Institute of Mathematics\\ Polish Academy of Sciences\\
00-656 Warszawa, Poland}
\email{m.doucha@post.cz}
\keywords{automorphism groups, Fra\" iss\' e theory, Urysohn space, random graph, ultrahomogeneous structures}
\thanks{The author was supported by funds allocated to the implementation of the international co-funded project in the years 2014-2018, 3038/7.PR/2014/2, and by the EU grant PCOFUND-GA-2012-600415.}
\subjclass[2010]{54H11, 20B27, 22F50, 03C98}
\begin{abstract}
In \cite{MbPe}, Mbombo and Pestov prove that the group of isometries of the generalized Urysohn space of density $\kappa$, for uncountable $\kappa$ such that $\kappa^{<\kappa}=\kappa$, is not a universal topological group of weight $\kappa$. We investigate automorphism groups of other uncountable ultrahomogeneous structures and prove that they are rarely universal topological groups for the corresponding classes. Our list of uncountable ultrahomogeneous structures includes random uncountable graph, tournament, linear order, partial order, group. That is in contrast with similar results obtained for automorphism groups of countable (separable) ultrahomogeneous structures.

We also provide a more direct and shorter proof of the Mbombo-Pestov's result.
\end{abstract}
\maketitle
\section*{Introduction}
In \cite{Us}, Uspenskij proved that the group of all isometries of the Urysohn universal metric space with pointwise convergence topology is a universal topological group of countable weight. The proof consists of two steps. First, realizing that any topological group (of countable weight) is isomorphic to a subgroup of the group of isometries of some (separable) metric space. Second and most important, that every group of isometries of some separable metric space with pointwise convergence topology embeds into the group of isometries of the Urysohn space.

The latter fact was then observed in place of many other ultrahomogeneous (discrete) structures (we refer the reader to the next section for definitions). Let $M$ be a (discrete or metric) ultrahomogeneous structure and let $\mathrm{Aut}(M)$ denote the group of all automorphisms of $M$ equipped with the pointwise convergence topology. Is then $\mathrm{Aut}(M)$ the universal group for the class of groups that can be represented as automorphism groups of some substructure of $M$? More precisely, let $G$ be a topological group that is isomorphic to (a subgroup of) the group of automorphisms of some substructure $A\subseteq M$. Does $G$ homeomorphically embed into $\mathrm{Aut}(M)$?  E. Jaligot was probably the first who, in \cite {Ja}, formulated this problem in general and proved it for $M$ being the random tournament. The same was proved by Bilge and Jaligot in \cite{BiJa} when $M$ is a random $K_n$-free graph, for $n\geq 3$. Bilge and Melleray in \cite{BiMe} generalized this for ultrahomogeneous structures when the corresponding Fra\" iss\' e class has the free amalgamation property. Kubi\' s and Ma\v sulovi\' c in \cite{KuMa} gave another examples and formulated the problem in the language of category theory. Another metric versions are the following: Gao and Shao in \cite{GaSha} proved it for a universal and ultrahomogeneous separable $R$-ultrametric space, where $R\subseteq \Rea^+$ is some countable set of distances. And recently, Ben Yaacov in \cite{BY} when proving that the group of linear isometries of the Gurarij space is a universal Polish group proved that this linear isometry group is also universal for the corresponding class of groups (the universality as a Polish group then follows from the fact that every Polish group is isomorphic to a subgroup of linear isometries of some separable Banach space).

It is an open question whether there is actually a counterexample, i.e. a countable ultrahomogeneous structure $M$ and a substructure $A$ such that $\mathrm{Aut}(A)$ does not topologically embed into $\mathrm{Aut}(M)$.

For certains uncountable cardinals, namely those uncountable $\kappa$ such that $\kappa^{<\kappa}=\kappa$, one can have the (discrete and metric) Fra\" iss\' e theory as well. It means one can produce structures of cardinality $\kappa$ that are $\kappa$-ultrahomogeneous, i.e. any partial isomorphism between two substructures of cardinality strictly less than $\kappa$ extends to the full automorphism. Kat\v etov, in \cite{Kat}, proved the existence of the generalized Urysohn space $\mathbb{U}_\kappa$, for $\kappa$ as above, of weight $\kappa$.

Mbombo and Pestov in \cite{MbPe}, when checking whether $\mathrm{Iso}(\mathbb{U}_\kappa)$ might be the universal topological group of weight $\kappa$ (the existence of such groups of any uncountable weight is still not known), found out that situation changes at the higher cardinality. They proved that $\mathrm{Iso}(\mathbb{U}_\kappa)$ is not the universal topological group of weight $\kappa$, i.e. equivalently, there are metric spaces $X$ of density at most $\kappa$ such that $\mathrm{Iso}(X)$ does not topologically embed into $\mathrm{Iso}(\mathbb{U}_\kappa)$.

In this paper, we focus on the general universality problem in the ``uncountable Fra\" iss\' e theoretic setting", as considered by Mbombo and Pestov in the particular case of the uncountable Urysohn space. We show that while in the countable case the norm is that the automorphism group of an ultrahomogeneous structure is universal for the corresponding class of groups, in the uncountable case the norm seems to be the opposite - it is probably very rarely universal.

Let us list here some particular structures for which we can prove it.
\begin{thm}\label{non-uni}
Let $\kappa$ be an uncountable cardinal such that $\kappa^{<\kappa}=\kappa$. Let $M$ be one of the following strcutures: random graph of cardinality $\kappa$, random partial order of cardinality $\kappa$, random linear order of cardinality $\kappa$, random tournament of cardinality $\kappa$ and random group of cardinality $\kappa$. Then $\mathrm{Aut}(M)$ is not universal. There is a substructure $A\subseteq M$ such that $\mathrm{Aut}(A)$ does not continuously embed into $\mathrm{Aut}(M)$.
\end{thm}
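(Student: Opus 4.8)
The plan is to exhibit, for each structure $M$ in the list, a substructure $A\subseteq M$ whose automorphism group is, as a topological group, a full direct product $\prod_{\xi<\kappa}C_\xi$ of $\kappa$ many non-trivial groups with the product topology — equivalently, a non-archimedean group whose identity neighbourhood basis is indexed by the finite subsets $J\subseteq\kappa$, with the $\xi$-th coordinate subgroup $C_\xi$ detected only through the coordinates lying in $J$. Such $A$ are easy to produce and have size $\le\kappa$, hence embed into $M$ by universality: for the random linear order take the lexicographic sum $A=\sum_{\xi<\kappa}\Int$, whose automorphisms translate each copy of $\Int$ independently and cannot permute the (ordinal-indexed) copies, so $\mathrm{Aut}(A)=\Int^{\kappa}$; for the random graph and the random partial order take a disjoint union of $\kappa$ pairwise non-isomorphic rigid gadgets, each carrying a $\Int/2\Int$-action, so $\mathrm{Aut}(A)=\prod_\xi\mathrm{Aut}(\mathrm{gadget}_\xi)=(\Int/2\Int)^{\kappa}$; for the random tournament a suitable lexicographic composition, and for the random group a direct sum of cyclic groups of distinct prime orders, play the same role. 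For $\Ur_\kappa$ one takes instead the $\kappa$-point discrete metric space, whose isometry group is $\mathrm{Sym}(\kappa)$ (which contains $(\Int/2\Int)^{\kappa}$ as a closed subgroup); this gives the promised short reproof of Mbombo--Pestov.

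The engine of the argument is two structural properties of $M$ and, crucially, of the expansions $M_{\bar c}$ of $M$ by naming $<\kappa$ many constants $\bar c$, which are again $\kappa$-ultrahomogeneous structures of the same kind. First, the \emph{no small support} property: every $g\in\mathrm{Aut}(M)$ with $g\neq\mathrm{id}$ moves $\kappa$ many points. This follows from the genericity (``extension'') property that makes $M$ universal: if $g(a)\neq a$, the set of points ``separating'' $a$ from $g(a)$ — for the graph the $v$ with $v\sim a$, $v\not\sim g(a)$; for the linear order the points of the interval with endpoints $a,g(a)$; and analogously otherwise — has size $\kappa$, and every such point lies in the support of $g$. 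Second, the \emph{large orbits} property: for any $B\subseteq M$ with $|B|<\kappa$, every orbit of the pointwise stabiliser $\mathrm{Aut}(M)_{(B)}$ on $M\setminus B$ has size $\kappa$; this is immediate from $\kappa$-ultrahomogeneity, since two points in the same such orbit are exactly two points realising the same type over $B$, and those type-classes all have size $\kappa$.

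Now suppose, towards a contradiction, that $\mathrm{Aut}(A)=\prod_{\xi<\kappa}C_\xi$ embeds continuously into $\mathrm{Aut}(M)$; write $\phi$ for the embedding and $C_\xi$ also for the corresponding coordinate subgroup. Continuity of $\phi$ means that for each $m\in M$ there is a finite $J_m\subseteq\kappa$ with $\phi(U_{J_m})$ fixing $m$, where $U_J$ is the kernel of the projection onto $\prod_{\xi\in J}C_\xi$; hence $\phi(C_\xi)$ fixes $m$ whenever $\xi\notin J_m$, so the set of points moved by $\phi(C_\xi)$ is contained in $S_\xi:=\{m:\xi\in J_m\}$. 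Since $\phi$ is injective, $\phi(C_\xi)\neq\{\mathrm{id}\}$, so by the no small support property $|S_\xi|=\kappa$ for every $\xi<\kappa$; note also that, since each $J_m$ is finite, the family $\{S_\xi:\xi<\kappa\}$ is point-finite, so one has $\kappa$ many pairwise ``almost disjoint'' $\kappa$-sized regions of the $\kappa$-element set $M$, each supporting a non-trivial member of a commuting family $\{\phi(C_\xi)\}$. The contradiction must now be extracted from this configuration together with the genericity of $M$ (mere cardinality and topology do not suffice, as $\mathrm{Aut}(M)$ shares both with $\mathrm{Sym}(\kappa)$): the idea is to build by transfinite recursion an increasing continuous chain $(B_\eta)_{\eta<\kappa}$ of subsets of $M$ with $|B_\eta|<\kappa$, and an injection $\eta\mapsto\xi_\eta$ of $\kappa$ into itself, so that $\phi(C_{\xi_\eta})$ is already determined on $M\setminus B_\eta$ — using the large orbits property to see how the orbit structure of the already-placed coordinates forces the placement of the next one — and to conclude that some $\phi(C_{\xi_\eta})$ must fix the co-$(<\kappa)$ set $M\setminus B_\eta$, i.e. have support of size $<\kappa$, contradicting no small support. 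I expect this last recursion to be the genuinely delicate step: turning the heuristic ``there is not enough room for $\kappa$ independent commuting directions'' into an honest contradiction is exactly where the genericity of $M$ (beyond cardinality and topology) has to be used in full, and it is also the step that will dictate the precise choice of the gadgets $C_\xi$.

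Finally, the two structural lemmas and the gadget constructions have to be verified separately for each item on the list. This is routine for the graph, the partial order and the linear order (for the linear order one uses that an order-automorphism moving $a$ to $b>a$ cannot fix any point of $(a,b)$, giving no small support at once), and it requires a little care for the random tournament, where disjoint unions are unavailable and one must arrange the gadgets inside a single tournament, and for the random group, where ``support'' has to be read off the fixed-point subgroup of an automorphism. The metric case $M=\Ur_\kappa$ runs on the metric analogues of the two properties: every non-trivial isometry of $\Ur_\kappa$ moves a $\kappa$-dense set, and the stabiliser of a $<\kappa$-dense subset acts with $\kappa$-dense orbits on the complement.
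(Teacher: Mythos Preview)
Your overall strategy is quite different from the paper's. The paper never uses products $\prod_{\xi<\kappa}C_\xi$; instead it shows that a specific \emph{Polish} group fails to embed: $S_\infty$ for the random graph, the random group, and $\Ur_\kappa$; and $\mathrm{Aut}(\Rat,<)$ for the random linear order (hence for the partial order and the tournament, which contain it). In each case the argument is: from a hypothetical continuous embedding, build by a \emph{countable} induction a configuration of points $\{m_l,n_l\}_l$ (or a nested sequence of intervals, in the order case) witnessing that every basic neighbourhood of the identity in the Polish group moves something; then use the $<\kappa$-extension property of $M$ to produce a \emph{single} element $x\in M$ ``separating'' the $m_l$ from the $n_l$ (a vertex adjacent to all $m_l$ and no $n_l$; a point in $\bigcap_n I_n$; an element commuting with all $f_n$ and no $h_n$; a point realising a suitable Kat\v etov function). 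The pointwise stabiliser of $x$ then contradicts continuity. The work is case-by-case, but each case is short and complete.

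Your proposal, by contrast, has a genuine gap at the decisive step. You correctly isolate the ``no small support'' and ``large orbits'' properties, and correctly deduce that the supports $S_\xi$ of the coordinate subgroups form a point-finite family of $\kappa$-sized sets. But the promised contradiction is never derived: you only sketch an intention (``build by transfinite recursion a chain $(B_\eta)$ \ldots\ and conclude that some $\phi(C_{\xi_\eta})$ has support $\subseteq B_\eta$'') and explicitly flag it as ``the genuinely delicate step'' that ``will dictate the precise choice of the gadgets''. As written, this is a hope, not an argument. It is not clear why the large-orbits property should force any coordinate subgroup to have small support; indeed, point-finite families of $\kappa$-sized supports for commuting automorphisms are not obviously incompatible with either of your two structural lemmas, and you give no mechanism linking them. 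By contrast, the paper's method sidesteps this entirely: because the witnessing group is Polish, a \emph{countable} configuration suffices, and the $<\kappa$-extension property of $M$ applies directly.

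There is also a concrete error in your gadget for the random group: a direct sum of cyclic groups of \emph{distinct prime orders} cannot have $\kappa$ summands when $\kappa$ is uncountable, since there are only countably many primes. (The paper avoids this by again using $S_\infty$, realised inside $\mathrm{Aut}(\mathbb{F}_\infty)$, and exploiting the extension property of $\mathbb{G}_\kappa$ to find a single element with prescribed commutation behaviour.) Your graph and tournament gadgets are stated too loosely to be checked (``rigid gadgets each carrying a $\Int/2\Int$-action'' is self-contradictory as phrased; ``a suitable lexicographic composition'' is not a construction), though plausible repairs exist. In short: the reduction to products $\prod_\xi C_\xi$ is an interesting idea, but the heart of the proof is missing, and at least one of the constructions is wrong as stated.
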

We also provide a shorter and more direct proof of the Mbombo-Pestov's result.
\section{Preliminaries}
Let us explain more precisely the problem from the introduction here. The set-up is the following: let $L$ be some fixed countable language and let $M$ be a first-order $L$-structure, either countable discrete or separable metric (as is the case with the Urysohn space), that is ultrahomogeneous. These are precisely (metric) Fra\" iss\' e limits of (metric) amalgamation classes. We refer the reader to \cite{Ho} for information about Fra\" iss\' e theory, to \cite{BYFr} for information about metric Fra\" iss\' e theory and to \cite{Mac} for a survey on ultrahomogeneous structures.

The property characterizing these structures is the following. For any two finite substructures $A,B\subseteq M$ such that $A$ embeds into $B$ via an embedding $\iota:A\hookrightarrow B$ there exists an embedding $\rho: B\hookrightarrow M$ such that $\rho\circ \iota=\mathrm{id}_A$. This property is also called the finite-extension property of $M$. For any $L$-structure $K$, let $\mathrm{Age}(K)$ be the set of its all finite substructures. The universality of $M$ then means that for every countable (separable) $L$-structure $A$ such that $\mathrm{Age}(A)\subseteq \mathrm{Age}(M)$ there is an embedding of $A$ into $M$. Ultrahomogeneity is the property that every finite partial isomorphism between two finite substructures of $M$ extends to a full automorphism of $M$.

Although there is no general theorem, apparently for any known example of countable (or separable metric) ultrahomogeneous structures $M$ and for any substructure $A\subseteq M$ one can find a subgroup $G_A\leq \mathrm{Aut}(M)$ such that $\mathrm{Aut}(A)$ and $G_A$ are topologically isomorphic. Even something stronger holds in all known cases: for any such substructure $A$ one can find an isomorphic copy $A'$ of $A$ inside $M$ such that every automorphism of $A'$ extends to an automorphism of $M$.\\

Consider and fix now an uncountable cardinal $\kappa$ such that $\kappa^{<\kappa}=\kappa$. Consistently, there is no such cardinal (if there is no innacessible cardinal and the generalized continuum hypothesis fails at every regular cardinal). On the other hand, under GCH every isolated cardinal has this property. The generalized Fra\" iss\' e theorem (standard version from \cite{Fr}), replacing $\aleph_0$ by $\kappa$, works well for $\kappa$ with this property. We shall call a structure $M$, of cardinality $\kappa$, $\kappa$-ultrahomogeneous if any partial isomorphism between two substructures of $M$ of cardinality strictly less than $\kappa$ extends to a full automorphism of $M$. As in the countable case, it follows that $M$ is then a Fra\" iss\' e limit of the class $\mathrm{Age}_{<\kappa}(M)$ of all substructures of $M$ of size strictly less than $\kappa$. Moreover, if $\mathrm{Age}_{<\kappa}(M)$ contains (isomorphic copies of) all structures of given type of cardinality strictly less than $\kappa$, then we shall also call $M$ a $\kappa$-universal structure. It then follows from $\kappa$-ultrahomogeneity that $M$ contains as a substructure every structure of given type of cardinality at most $\kappa$.

By $\mathcal{A}(M)$, let us denote the following class of groups: $\{\mathrm{Aut}(N):N\text{ is a substructure of }M\}$. We shall say that $\mathrm{Aut}(M)$ is a universal topological group for the class $\mathcal{A}(M)$ if every $G\in \mathcal{A}(M)$ is topologically isomorphic to a subgroup of $\mathrm{Aut}(M)$; in other words, if for every substructure $A\subseteq M$ (of arbitrary size) the group $\mathrm{Aut}(A)$ homeomorphically embeds into $\mathrm{Aut}(M)$. 

The conjecture for countable (resp. separable metric) ultrahomogeneous structures is that the corresponding automorphism group is always universal for the corresponding class of groups. Let us note that in some trivial cases this is true as well for uncountable ultrahomogeneous structures. For instance, the cardinal $\kappa$ as a $\kappa$-ultrahomogeneous structure of an empty language is clearly universal. Similarly, if $M$ is a $\kappa$-ultrahomogeneous structure in a language having at most unary relations such that $\mathrm{Age}_{<\kappa}(M)$ has free amalgamation, then $\mathrm{Aut}(M)$ is universal as the standard methods for proving universality in the countable case work well there. However, our aim in this article is to show that in the interesting cases the universality fails.\\

Let us conclude this section with some notation. If $M$ is some discrete structure, of whatever cardinality, $A\subseteq M$ is some finite subset, then by $\mathcal{N}^{\mathrm{Aut}(M)}_A$ we shall denote the basic open neighbourhood of the identity in $\mathrm{Aut}(M)$ consisting of those elements that fix $A$ pointwise (if $M=\Nat$ is a countable structure of an empty language, then we shall write, as usual, $S_\infty$ there instead of $\mathrm{Aut}(M)$). If $M$ is equipped with a metric, the only such case will be when $M$ is the (generalized) Urysohn space, then we shall write $\mathrm{Iso}(M)$ instead of $\mathrm{Aut}(M)$. Moreover, if $A\subseteq M$ is a finite subset and $\varepsilon>0$, then  $\mathcal{N}^{\mathrm{Iso}(M)}_{A,\varepsilon}$ is the basic open neighbourhood of the identity consisting of those elements that move elements from $A$ less than $\varepsilon$-far.
\section{Proofs of theorems}
We do not have a general characterization of $\kappa$-ultrahomogeneous structures such that their automorphism groups are not universal nor do we have a single general proof for Theorem \ref{non-uni} from the introduction. We still start with a fairly simple non-universality result that is formulated quite generally. However, the only natural example known to us that fits into this general scheme is the automorphism group of the random graph of cardinality $\kappa$ (Fra\" iss\' e limit of the class of all graphs of cardinality strictly less than $\kappa$). We need a simple definition at first.
\begin{defin}[Property A]
Let $M$ be an infinite structure in some language $L$. Let $X_1,X_2\subseteq M$ be two countably infinite disjoint subsets. We say that $M$ has property A if there exists a set of binary relations $(R_i)_i\subseteq L$ and an element $x\in M$ such that for every $y\in X_1$ there is $i$ such that $R_i(x,y)$, and on the other hand, for no $y\in X_2$ there is $i$ such that $R_i(x,y)$.
\end{defin}
\noindent {\it Example 1}. If $M$ is a random graph of cardinality $\kappa$ and $X_1,X_2\subseteq M$ are two disjoint countable subsets, then there exists $x\in M$ such that $x$ is connected by an edge to every element of $X_1$ and to no element of $X_2$.\\

\noindent {\it Example 2}. If $M$ is a random tournament of cardinality $\kappa$ (recall that tournament is an oriented graph where every two vertices are connected by an edge) and $X_1,X_2\subseteq M$ are two disjoint countable subsets, then there exists $x\in M$ such that there is an oriented edge going from $x$ to every element of $X_1$ and such that there is an oriented edge going from every element of $X_2$ to $x$.

\begin{thm}
Let $M$ be a $\kappa$-universal and $\kappa$-ultrahomogeneous structure having property A. Then $S_\infty$ does not continuously embed into $\mathrm{Aut}(M)$.
\end{thm}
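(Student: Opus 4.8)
The plan is to establish the formally stronger statement that there is no continuous injective homomorphism $\phi\colon S_\infty\to\mathrm{Aut}(M)$ at all; so suppose such a $\phi$ is given. The engine of the argument is that $S_\infty$ is $\omega$-bounded --- it is covered by the countably many left cosets of each basic neighbourhood $\mathcal{N}^{S_\infty}_{\{0,\dots,n-1\}}$ of the identity --- and hence so is its continuous image $\phi(S_\infty)$. Consequently every orbit of $\phi(S_\infty)$ on $M$ is countable: for $x\in M$ the stabiliser $\phi(S_\infty)\cap\mathcal{N}^{\mathrm{Aut}(M)}_{\{x\}}$ is a relatively open subgroup of the $\omega$-bounded group $\phi(S_\infty)$ and so has countable index. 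Moreover $S_\infty$ has no proper subgroup of finite index (a finite-index subgroup contains its normal core, which is normal of finite index, hence is all of $S_\infty$), so a finite orbit of $\phi(S_\infty)$ is a singleton; since $\phi$ is injective and $S_\infty$ is non-trivial, there is a point $x_0\in M$ whose orbit $O=\phi(S_\infty)\,x_0$ is countably infinite. Restriction to the invariant set $O$ gives a (continuous) homomorphism $\rho\colon S_\infty\to\mathrm{Sym}(O)$, $g\mapsto\phi(g)|_O$, with $\rho(S_\infty)$ transitive on the countable set $O$, and the contradiction will come from two incompatible properties of this action.

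The first property is that every infinite and coinfinite $W\subseteq O$ has a countable $\rho(S_\infty)$-orbit. This is exactly where property A is used: applied to the disjoint countably infinite sets $W$ and $O\setminus W$ it yields a non-empty set $S$ of binary relation symbols of $L$ and an element $x\in M$ such that for each $y\in W$ there is $R\in S$ with $R(x,y)$, while $\lnot R(x,y)$ for every $R\in S$ and $y\in O\setminus W$. Writing $N_R(z)=\{y\in O:R(z,y)\}$ this means precisely that $\bigcup_{R\in S}N_R(x)=W$. Since every $\alpha\in\phi(S_\infty)$ permutes $O$ we have $N_R(\alpha x)=\alpha\,N_R(x)$, hence $\alpha(W)=\bigcup_{R\in S}N_R(\alpha x)$ depends only on $\alpha x$; as the orbit $\phi(S_\infty)\,x$ is countable by the previous paragraph, $\{\alpha(W):\alpha\in\phi(S_\infty)\}$ is countable as well. (The set $S$ may depend on $W$, but that does no harm, because in this computation $W$ is fixed and only $\alpha$ varies.)

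The second property is that, on the contrary, some infinite and coinfinite $W^{*}\subseteq O$ has an uncountable $\rho(S_\infty)$-orbit. Let $V_n=\mathcal{N}^{S_\infty}_{\{0,\dots,n-1\}}$; these are open, each isomorphic to $S_\infty$, and they form a neighbourhood basis at the identity. The key claim is that each $\rho(V_n)$ has an infinite orbit on $O$: if all of its orbits were finite, then $\rho(V_n)$ would embed into a product of finite symmetric groups and hence be residually finite, which is impossible, since a non-trivial quotient of $S_\infty$ (being $S_\infty$, $S_\infty/\mathrm{Alt}_{\mathrm{fin}}$ or $S_\infty/\mathrm{Sym}_{\mathrm{fin}}$, none of which has a proper finite-index subgroup) is never residually finite; and $\rho(V_n)$ cannot be trivial, since then $\ker\rho$ would have finite index and hence equal $S_\infty$, contradicting transitivity of $\rho(S_\infty)$ on the infinite set $O$. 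Picking an infinite $\rho(V_n)$-orbit $B_n$ for each $n$, one builds $W^{*}$ by recursion: at stage $n$, with only finitely many points of $O$ decided so far, put one not-yet-decided element of $B_n$ into $W^{*}$ and another into its complement, and decide a few more points so that $W^{*}$ and $O\setminus W^{*}$ end up infinite. Then $W^{*}$ is $\rho(V_n)$-invariant for no $n$, so $\rho^{-1}(\mathrm{Stab}_{\mathrm{Sym}(O)}(W^{*}))$ contains no $V_n$ and is therefore not open; by the Dixon--Neumann--Thomas theorem on subgroups of small index in $S_\infty$, a non-open subgroup has index at least $2^{\aleph_0}$, so the $\rho(S_\infty)$-orbit of $W^{*}$ is uncountable, contradicting the first property.

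The step I expect to be the real obstacle is the second one: proving that each $\rho(V_n)$ genuinely moves an infinite part of $O$ --- this is where the lack of small quotients of $S_\infty$ and the Dixon--Neumann--Thomas dichotomy are actually needed --- and then organising the recursive construction of $W^{*}$ so that it defeats all of the (countably many) neighbourhoods $V_n$ at once while remaining infinite and coinfinite. By comparison the first property, and the $\omega$-boundedness bookkeeping, are routine once one has isolated the correct equivariant encoding $z\mapsto\bigcup_{R\in S}N_R(z)$ of subsets of $O$ by points of $M$ that property A supplies.
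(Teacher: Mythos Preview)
Your argument is correct apart from one small slip: when you say that if $\rho(V_n)$ were trivial then $\ker\rho$ ``would have finite index'', this is not right --- $V_n$ has countably infinite index in $S_\infty$. The conclusion you want, $\ker\rho=S_\infty$, nevertheless follows immediately from the classification of normal subgroups of $S_\infty$ (which you invoke anyway a few lines earlier): $\ker\rho$ is normal and contains an element of infinite support, hence equals $S_\infty$. With that repair the proof goes through.

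That said, your route is far heavier than the paper's. The paper argues in one paragraph and uses nothing beyond continuity and property~A: it builds, by a direct diagonalisation, two disjoint countable sets $\{m_l\}$ and $\{n_l\}$ in $M$ together with $\phi_l\in e[\mathcal{N}^{S_\infty}_{\{1,\dots,l\}}]$ such that $\phi_l(m_l)=n_l$; property~A then produces a single point $x\in M$ related (via some $R_i$) to every $m_l$ and to no $n_l$, and continuity forces some $\phi_l$ to fix $x$, contradicting $\phi_l(m_l)=n_l$. No orbits, no $\omega$-boundedness, no Dixon--Neumann--Thomas, no classification of normal subgroups, no residual finiteness. Your approach instead reorganises the picture structurally: all $\phi(S_\infty)$-orbits on $M$ are countable, property~A forces the induced action on infinite/coinfinite subsets of a fixed orbit $O$ to have only countable orbits, while the small-index theorem forces one such subset to have uncountable orbit. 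This is conceptually pleasant --- it isolates exactly the tension between the $\omega$-boundedness of $S_\infty$ and the rigidity imposed by property~A --- and it would likely adapt to other source groups that share the same small-index behaviour. The price is that you import several non-trivial external facts where the paper needs none; in particular, the step you flagged as ``the real obstacle'' (producing $W^*$) is precisely where the Dixon--Neumann--Thomas theorem does the work that the paper's bare-hands diagonalisation does for free.
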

\begin{proof}
Suppose that $S_\infty$ does embed through the continuous embedding $e:S_\infty \hookrightarrow \mathrm{Aut}(M)$. 

We shall now inductively produce two disjoint countable subsets $\{m_1,m_2,\ldots\},\{n_1,n_2,\ldots\}$ of $M$ and elements $\phi_1,\phi_2,\ldots \in e[S_\infty]$. At the first step, let $\phi_1\in \mathrm{Aut}(M)$ be the image of $f_1\in S_\infty$ via $e$, where $f_1$ is arbitrary such that it fixes $1$, i.e belongs to $\mathcal{N}^{S_\infty}_{\{1\}}$. Pick an arbitrary element $m_1\in M$ such that $\phi_1(m_1)\neq m_1$. Clearly, such an element in $M$ exists since $\phi_1$ is not the identity. Denote $\phi_1(m_1)$ as $n_1$.

Suppose we have already found elements $m_1,\ldots,m_{l-1}$ and $n_1,\ldots,n_{l-1}$. Let $\phi_l\in \mathrm{Aut}(M)$ be the image of $f_l\in S_\infty$ via $e$, where $f_l$ is arbitrary such that it fixes the elements $1,\ldots,l$, i.e belongs to $\mathcal{N}^{S_\infty}_{\{1,\ldots,l\}}$, such that there exists $m_l\in M\setminus \{m_1,n_1,\ldots,m_{l-1},n_{l-1}\}$ such that $\phi_l(m_l)\notin \{m_l\}\cup \{m_1,n_1,\ldots,m_{l-1},n_{l-1}\}$. Such $\phi_l$ and $m_l$ exist. Otherwise, every element $\phi\in e[\mathcal{N}^{S_\infty}_{\{1,\ldots,l\}}]$ would move only the elements from the set $\{m_1,n_1,\ldots,m_{l-1},n_{l-1}\}$ and fix the rest, which is not possible since $\mathcal{N}^{S_\infty}_{\{1,\ldots,l\}}$ is uncountable. Denote $\phi_l(m_l)$ as $n_l$.

When the inductive construction is finished we use the property A of $M$ and find an element $x\in M$ such that we have a set of binary relations $(R_i)_i$ such that 
\begin{enumerate}
\item for every $l$ there is $i$ such $R_i(x,m_l)$
\item for no $l$ there is $i$ such that $R_i(x,n_l)$
\end{enumerate}
Let $\mathcal{N}^{\mathrm{Aut}(M)}_{\{x\}}$ be the basic open neighbourhood of the identity in $\mathrm{Aut}(M)$ consisting of those automorphisms of $M$ that fix $x$. Since $e$ is a continuous embedding, $\mathcal{N}=e^{-1}[\mathcal{N}^{\mathrm{Aut}(M)}_{\{x\}}]$ is an open neighbourhood of the identity in $S_\infty$. Let $l$ be such that $\mathcal{N}^{S_\infty}_{\{1,\ldots,l\}}\subseteq \mathcal{N}$. It follows that $e(f_l)=\phi_l\in e[\mathcal{N}^{S_\infty}_{\{1,\ldots,l\}}]\subseteq \mathcal{N}^{\mathrm{Aut}(M)}_{\{x\}}$. However, $\phi_l(m_l)=n_l$. According to $(1)$ there is some $i$ such that $R_i(x,m_l)$ and since $\phi_l$ is an automorphism fixing $x$, we get that $R_i(x,n_l)$ which contradicts $(2)$.
\end{proof}
\begin{cor}
Let $M$ be the random graph of cardinality $\kappa$. Then $\mathrm{Aut}(M)$ is not a universal topological group for the class $\mathcal{A}(M)$.
\end{cor}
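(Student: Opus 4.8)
The plan is to read the corollary directly off the preceding theorem together with Example~1. First I would record that the random graph $M$ of cardinality $\kappa$ is, by definition, the Fra\"iss\'e limit of the class of all graphs of size strictly less than $\kappa$, so it is simultaneously $\kappa$-universal and $\kappa$-ultrahomogeneous; these are exactly the hypotheses the theorem places on $M$.

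Next I would verify that $M$ has property~A, which is precisely the assertion of Example~1. Given two disjoint countably infinite subsets $X_1,X_2\subseteq M$, consider the graph $G$ obtained from the induced subgraph on $X_1\cup X_2$ by adjoining one new vertex $v$ joined by an edge to every vertex of $X_1$ and to no vertex of $X_2$. Since $\kappa$ is uncountable, $X_1\cup X_2\cup\{v\}$ is a graph of size strictly less than $\kappa$, hence lies in $\mathrm{Age}_{<\kappa}(M)$; by the small-extension property of the Fra\"iss\'e limit there is an embedding of $G$ into $M$ restricting to the identity on $X_1\cup X_2$. Its image $x$ of $v$ is then adjacent to every element of $X_1$ and to no element of $X_2$, so property~A holds, the relevant family of binary relations being the single edge relation. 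Applying the theorem, $S_\infty$ does not continuously embed into $\mathrm{Aut}(M)$.

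It then remains to realize $S_\infty$ as the automorphism group of a substructure of $M$. For this I would take $A\subseteq M$ to be any countably infinite independent set (an edgeless countable graph belongs to $\mathrm{Age}_{<\kappa}(M)$, so $M$, being universal for its age, contains such a copy). Then $\mathrm{Aut}(A)$ is the full symmetric group on a countable set, and its point-wise convergence topology is exactly the standard topology of $S_\infty$. Since $S_\infty=\mathrm{Aut}(A)$ does not continuously — a fortiori not homeomorphically — embed into $\mathrm{Aut}(M)$, the group $\mathrm{Aut}(M)$ is not universal, and moreover $A$ is the witnessing substructure.

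I do not anticipate a genuine obstacle here: the whole argument is a matter of checking that $M$ satisfies the hypotheses of the theorem and that $S_\infty$ appears as an automorphism group of a substructure. The only point demanding any care is the verification of property~A, where one must ensure that the required one-point extension of a countable graph remains inside the Fra\"iss\'e class $\mathrm{Age}_{<\kappa}(M)$; this is exactly where the uncountability of $\kappa$ is used, since it guarantees that countable extensions are admissible.
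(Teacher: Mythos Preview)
Your argument is correct and follows essentially the same route as the paper: verify property~A via Example~1, invoke the theorem to exclude a continuous embedding of $S_\infty$, and then exhibit a countable substructure $A$ with $\mathrm{Aut}(A)\cong S_\infty$. The only cosmetic difference is that the paper takes $A$ to be a countable clique rather than a countable independent set; either choice works equally well.
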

\begin{proof}
By Example 1 above, $M$ has property A and thus by the previous theorem, $S_\infty$ does not continuously embed into $\mathrm{Aut}(M)$. Let $A\subseteq M$ be some countable clique, then $\mathrm{Aut}(A)=S_\infty$ and the statement follows.
\end{proof}
\begin{thm}
Let $M$ be a $\kappa$-universal and $\kappa$-ultrahomogeneous structure that contains as a substructure the $\kappa$-universal and $\kappa$-ultrahomogeneous linear order. Then $\mathrm{Aut}(M)$ is not a universal topological group for the class $\mathcal{A}(M)$.
\end{thm}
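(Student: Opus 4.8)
The plan is to run, in a transfinite form, the argument used for the preceding theorem (the one via Property~A), with the linear order that $M$ carries on its distinguished copy of the $\kappa$-universal linear order playing the role there played by the relations $(R_i)_i$. Fix once and for all a substructure $D\subseteq M$ with $(D,{\le})$ isomorphic to the $\kappa$-universal linear order. The one fact we need is that $D$, being $\kappa$-saturated, contains a point strictly between any two disjoint ``order-separated'' subsets of total size $<\kappa$; more generally, using also $\kappa$-universality of $M$, from a configuration $C\subseteq M$ of size $<\kappa$ and disjoint $Y_-,Y_+\subseteq C$ with no member of $Y_+$ below any member of $Y_-$ one can (when the obvious one-point extension lies in $\mathrm{Age}_{<\kappa}(M)$) realize a point $x\in M$ lying below everything of $Y_+$ and below nothing of $Y_-$. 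Note finally that an automorphism of $M$ fixing a point $x$ of $D$ maps $\{z:z<x\}$ onto itself — the order-theoretic substitute for preserving $R_i(x,\cdot)$.

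The substructure $A$ must be an \emph{uncountable} sub-linear-order of $D$: every countable linear order embeds into $\mathbb{Q}$ and hence into $D$, so a countable $A$ with $\mathrm{Aut}(A)\not\hookrightarrow\mathrm{Aut}(M)$ would give a countable $A$ with $\mathrm{Aut}(A)\not\hookrightarrow\mathrm{Aut}(\mathbb{Q})$, contradicting the open status of the universality problem in the countable case. For $A$ I would take a dense linear order without endpoints of cardinality $\kappa$ that fails to be $\kappa$-saturated — for instance one assembled from copies of the $\kappa$-universal order glued along an increasing $\omega$-chain and a decreasing $\omega$-chain meeting at a gap of countable cofinality on both sides — arranged so that around every point of $A$ there are many automorphisms (so that each basic identity neighbourhood $\mathcal{N}^A_F$, $F\subseteq A$ finite, is ``large'' in the sense of the Property~A proof) while $\mathrm{Aut}(A)$ is non-metrizable; the non-metrizability is what prevents $A$ from being countable and is essential to the argument.

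Now suppose $e\colon\mathrm{Aut}(A)\hookrightarrow\mathrm{Aut}(M)$ is a continuous embedding. By a recursion along a well-ordering of the finite subsets of $A$ I would build points $m_F,n_F\in M$ and automorphisms $\phi_F=e(f_F)$ with $f_F\in\mathcal{N}^A_F$, $\phi_F(m_F)=n_F\neq m_F$, keeping all chosen points distinct and keeping no $n_{F'}$ below any $m_F$ (replacing $f_F$ by $f_F^{-1}$ when that is needed to push $m_F$ downward to $n_F$); at each stage the required $\phi_F,m_F$ should be found, as in the Property~A proof, because $\mathcal{N}^A_F$ is too large for $e[\mathcal{N}^A_F]$ to act only trivially on the portion of $M$ still available. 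When the recursion is complete, the separation property yields $x\in M$ below every $m_F$ and below no $n_F$; continuity of $e$ gives a finite $F_0$ with $\mathcal{N}^A_{F_0}\subseteq e^{-1}[\mathcal{N}^M_{\{x\}}]$, so $\phi_{F_0}$ fixes $x$ while $\phi_{F_0}(m_{F_0})=n_{F_0}$ with $m_{F_0}>x\ge n_{F_0}$, contradicting that $\phi_{F_0}$ preserves $\{z:z>x\}$. The main obstacle — where almost all the work lies — is the recursion step together with the separation property: because automorphisms of $M$ need not preserve $D$, and because, by continuity, the $\phi_F$ with $F$ large are forced to fix every previously named point, one cannot simply demand that $\phi_F$ move a prescribed point downward past a prescribed threshold; keeping the family $\{n_F\}$ order-below the family $\{m_F\}$ (and, in the partial-order and tournament variants, keeping the relevant points pairwise comparable in the ambient relation) is exactly what must be engineered through the shape of $A$ and through pulling additional point-stabilizers of $M$ into the bookkeeping at each stage.
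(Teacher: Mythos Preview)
Your proposal has two genuine gaps, and the paper's argument is quite different from what you sketch.

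First, your claim that the witness $A$ \emph{must} be uncountable is based on a non-sequitur. You write that a countable $A$ with $\mathrm{Aut}(A)\not\hookrightarrow\mathrm{Aut}(M)$ would yield $\mathrm{Aut}(A)\not\hookrightarrow\mathrm{Aut}(\Rat)$; but there is no such implication, since $\mathrm{Aut}(\Rat)$ and $\mathrm{Aut}(M)$ are entirely different groups and nothing links embeddability into one with embeddability into the other. In fact the paper does exactly what you rule out: after reducing to the case where $M$ itself is the $\kappa$-universal linear order, it takes $A=(\Rat,<)$ and shows that $\mathrm{Aut}(\Rat)$ does not continuously embed into $\mathrm{Aut}(M)$. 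The point is precisely that $\mathrm{Aut}(\Rat)$, while trivially embedding into itself, fails to embed into the automorphism group of the much larger $\kappa$-saturated order.

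Second, and more seriously, you identify but do not resolve the real obstacle: arranging that every $n_F$ lies below every $m_F$. A linear order does not have Property~A; you cannot separate two arbitrary countable sets by a single point, only sets that are already order-separated. Replacing $f_F$ by $f_F^{-1}$ lets you choose, for each $F$ individually, which of $m_F,n_F$ is smaller, but gives no control over how $m_F$ and $n_F$ sit relative to the points produced at \emph{other} stages. Since by continuity $e(f_F)$ must fix all previously named points once $F$ is large, you cannot push $n_F$ past any prescribed threshold, and there is no evident mechanism to keep the two clouds of points order-separated. You yourself flag this as ``where almost all the work lies'' and then leave it undone; as stated, the recursion does not go through.

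The paper's approach avoids this difficulty entirely. It exploits the algebraic structure of $\mathrm{Aut}(\Rat)$: the rational shifts $s_q$ form a copy of $(\Rat,+)$ inside $\mathrm{Aut}(\Rat)$, and their images under $e$ generate an ``interval'' $\mathcal{I}\subseteq M$ on which one can analyse pointwise stabilizers. Through a sequence of claims the paper builds a strictly decreasing chain of open subintervals $I_1\supseteq I_2\supseteq\cdots$ of $\mathcal{I}$, each $I_n$ disjoint from the set $\mathcal{I}_n$ of points fixed by $e[\mathcal{N}^{\mathrm{Aut}(\Rat)}_{\{q_1,\dots,q_n\}}]$. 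By $\kappa$-saturation the intersection contains a point $z_\infty$; then for every $n$ there is $f\in\mathcal{N}^{\mathrm{Aut}(\Rat)}_{\{q_1,\dots,q_n\}}$ moving $z_\infty$, contradicting continuity of $e$ at the neighbourhood $\mathcal{N}^{\mathrm{Aut}(M)}_{\{z_\infty\}}$. The separating point is produced not by a Property~A--style one-shot extension, but as the limit of a carefully engineered nested sequence of intervals.
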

\begin{proof}
Suppose that it is universal. Then since $M$ contains the $\kappa$-universal and $\kappa$-ultrahomogeneous linear order, the automorphism group of this linear order continuously embeds into $\mathrm{Aut}(M)$. It follows that we may assume that $M$ is the $\kappa$-universal and $\kappa$-ultrahomogeneous linear order and it suffices to reach the contradiction by showing that $\mathrm{Aut}(\Rat,<)$ does not continuously embed into $\mathrm{Aut}(M)$.

Suppose it does embed and let $e:\mathrm{Aut}(\Rat,<)\hookrightarrow \mathrm{Aut}(M)$ be the continuous embedding.

For every $q\in \Rat$, let us denote by $s_q\in \mathrm{Aut}(\Rat)$ the shift by $q$, i.e. for every $h\in \Rat$ we have $s_q(h)=h+q$. Pick some $q\in \Rat^+$ and find and fix some $x\in M$ such that $e(s_q)(x)\neq x$. Let us show that then for every $p\in \Rat$ we have $e(s_p)(x)\neq x$. Indeed, first we can find $r\in \Rat$ and $k_p,k_q\in \Int$ such that $k_p\cdot r=p$ and $k_q\cdot r=q$. Consequently, $(s_r)^{k_q}=s_q$ and $(s_r)^{k_p}=s_p$. Thus $e(s_r)^{k_q}(x)=e(s_q)(x)\neq x$ and we must also have that $e(s_r)(x)\neq x$. It follows that also $e(s_p)(x)=e(s_r)^{k_p}(x)\neq x$.

By $\mathcal{I}$ we shall denote the set $\{z\in M: \exists q_L,q_R\in \Rat~ (e(s_{q_L})(x)\leq z\leq e(s_{q_R})(x))\}$.

We now need a series of claims.
\begin{claim}\label{claim1}
For every $q\in \Rat$ and for every $z\in \mathcal{I}$ we have $e(s_q)(z)\neq z$.
\end{claim}
Let $q\in \Rat$ and $z\in \mathcal{I}$ be given. By definition, there are $q_L,q_R\in \Rat$ such that $e(s_{q_L})(x)\leq z\leq e(s_{q_R})(x)$. Let $q_0\in \Rat$ be such that for some $k_L,k_R,k\in \Int$ we have $k\cdot q_0=q,k_L\cdot q_0=q_L,k_R\cdot q_0=q_R$.
Since $e(s_{q_0})^k(x)=e(s_q)(x)\neq x$ we have that $e(s_{q_0})(x)\neq x$.
Moreover, $x_L=e(s_{q_0})^{k_L}(x)\leq z\leq e(s_{q_0})^{k_R}(x)=x_R$. If $x_L=z=x_R$, then clearly $e(s_{q_0})(z)\neq z$, thus $e(s_q)(z)\neq z$. Otherwise, $e(s_{q_0})^{k_R-k_L}(x_L)=x_R>x_L$, thus $e(s_{q_0})^{k_R-k_L}(z)>x_R$, i.e. $e(s_{q_0})(z)\neq z$, thus again $e(s_q)(z)\neq z$.

\begin{claim}
For every $z\in \mathcal{I}$ there exists a finite set $F\subseteq \Rat$ such that $\forall f\in \mathcal{N}^{\mathrm{Aut}(\Rat)}_F~ (e(f)(z)=z)$.
\end{claim}
Otherwise, the preimage $e^{-1}[\mathcal{N}^{\mathrm{Aut}(M)}_{\{z\}}]$ of the open neighbourhood of the identity in $\mathrm{Aut}(M)$ would not be open in $\mathrm{Aut}(\Rat)$, contradicting the continuity of $e$.\\

For every finite $F\subseteq \Rat$, by $\mathcal{I}_F$ we shall denote the set $\{z\in \mathcal{I}: \forall f\in \mathcal{N}^{\mathrm{Aut}(M)}_F ~ (e(f)(z)=z)\}$. It is easy to see that each such $\mathcal{I}_F$ is closed (in the order topology). Also, we choose some enumeration $\{q_1,q_2,\ldots\}$ of the rationals, and then $\mathcal{I}_{\{q_1,\ldots,q_n\}}$ shall be denoted simply by $\mathcal{I}_n$. It follows that $\mathcal{I}=\bigcup_n \mathcal{I}_n$.

\begin{claim}
For every $n$, we have $\mathcal{I}_n\neq \mathcal{I}$.
\end{claim}
Suppose otherwise and let $n$ be such that $\mathcal{I}_n=\mathcal{I}$. For every $q\in \Rat$, by $\mathcal{I}_{n,q}$ we shall denote the set $\mathcal{I}_{\{q_1+q,\ldots,q_n+q\}}$. Since $\mathcal{N}^{\mathrm{Aut}(\Rat)}_{\{q_1+q,\ldots,q_n+q\}}\circ s_q=s_q\circ \mathcal{N}^{\mathrm{Aut}(\Rat)}_{\{q_1,\ldots,q_n\}}$ and $\mathcal{N}^{\mathrm{Aut}(\Rat)}_{\{q_1,\ldots,q_n\}}\circ s_{-q}=s_{-q}\circ \mathcal{N}^{\mathrm{Aut}(\Rat)}_{\{q_1+q,\ldots,q_n+q\}}$, we can deduce that $\mathcal{I}_{n,q}=e(s_q)[\mathcal{I}_n]$. Indeed, let $m\in e(s_q)[\mathcal{I}_n]$ and $f\in \mathcal{N}^{\mathrm{Aut}(\Rat)}_{\{q_1+q,\ldots,q_n+q\}}$ be arbitrary. Then there are $m'\in \mathcal{I}_n$ and $f'\in \mathcal{N}^{\mathrm{Aut}(\Rat)}_{\{q_1,\ldots,q_n\}}$ such that $e(s_q)(m')=m$ and $f\circ s_q=s_q\circ f'$. Thus $e(f)(m)=e(f\circ s_q)(m')=e(s_q\circ f')(m')=e(s_q)(m')=m$. The other inclusion is proved analogously. It follows that for every $q\in \Rat$ we have $\mathcal{I}_{n,q}=\mathcal{I}$. Now, the pointwise stabilizer of $\mathcal{I}$, the set $F_\mathcal{I}=\{\phi\in \mathrm{Aut}(M):\forall z\in \mathcal{I}~ (\phi(z)=z)\}$, is closed in $\mathrm{Aut}(M)$. So the preimage $e^{-1}[F_\mathcal{I}]$ must be closed in $\mathrm{Aut}(\Rat)$. It follows that the set $\{f\in \mathrm{Aut}(\Rat): \forall q\in \Rat ~(f\notin \mathcal{N}^{\mathrm{Aut}(\Rat)}_{\{q_1+q,\ldots,q_n+q\}})\}\supseteq e^{-1}[\mathrm{Aut}(M)\setminus F_\mathcal{I}]$ must have a non-empty interior in $(\mathrm{Aut}(\Rat),<)$. However, it is easy to check that it is nowhere dense. This contradiction finishes the proof of the claim.

\begin{claim}\label{helpclaim}
For no $n$ there exists a non-degenerated closed interval $I\subseteq \mathcal{I}$ such that $I\cap \mathcal{I}_n$ is a (possibly empty) subset of the end-points of $I$, $I\subseteq \mathcal{I}_{n+1}$ and for every $f\in \mathcal{N}^{\mathrm{Aut}(\Rat)}_{\{q_1,\ldots,q_n\}}$ we have $e(f)[I]\subseteq I$ (i.e. for every such $I$ we claim that $e[\mathcal{N}^{\mathrm{Aut}(\Rat)}_{\{q_1,\ldots,q_n\}}]$ is not contained in the set-wise stabilizer of $I$).
\end{claim}

Let us prove the claim. The proof is similar to the proof of the previous claim. Suppose that there exists such $n$. Suppose that $I=[x_1,x_2]$, for some $x_1<x_2\in \mathcal{I}$.

Let $f\in \mathcal{N}^{\mathrm{Aut}(\Rat)}_{\{q_1,\ldots,q_n\}}$ be such that $e(f)(z)\neq z$ for some $z\in I\setminus \{x_1,x_2\}$. Notice  that $f(q_{n+1})\neq q_{n+1}$.

Let $I^\Rat_f$ denote the set $\{q\in \Rat: \exists n_q\in \Int ~(f^{(n_q)}(q_{n+1})=q)\}$. Similarly, let $I^M_f\subseteq I$ be the set $\{m\in I: \exists n_m\in \Int ~(e(f^{(n_m)})(z)=m)\}$.

For every $n\in \Int$ and $m\in I^M_f$ we have that $e(f^{(n)})(m)\neq m$ (the proof is analogous to the proof of Claim \ref{claim1}). Moreover, since for every $q\in I^\Rat_f$ we have $\mathcal{N}^{\mathrm{Aut}(\Rat)}_{\{q_1,\ldots,q_n,q\}}\circ (f^{(n_q)})=(f^{(n_q)})\circ \mathcal{N}^{\mathrm{Aut}(\Rat)}_{\{q_1,\ldots,q_n,q_{n+1}\}}$, we can conclude that $I^M_f\subseteq\mathcal{I}_{\{q_1,\ldots,q_n,q\}}$. Indeed, let $q\in I^\Rat_f$, $g\in \mathcal{N}^{\mathrm{Aut}(\Rat)}_{\{q_1,\ldots,q_n,q\}}$ and $m\in I^M_f$ be arbitrary. Then there exist $n_m\in \Int$ such that $e(f^{(n_m)})(z)=m$ and $g'\in \mathcal{N}^{\mathrm{Aut}(\Rat)}_{\{q_1,\ldots,q_n,q_{n+1}\}}$ such that $g\circ f^{(n_q)}=f^{(n_q})\circ g'$.  So we have $$e(g)(m)=e(g\circ f^{(n_m}))(z)=e(g\circ f^{(n_q)}\circ f^{(n_m-n_q)})(z)=$$ $$e(f^{(n_q})\circ g'\circ f^{(n_m-n_q)})(z)=e(f^{(n_q)}\circ f^{(n_m-n_q)})(z)=m$$ where we used that $e(g'\circ f^{(n_m-n_q)})(z)=e(f^{(n_m-n_q)})(z)$ since $e(f^{(n_m-n_q)})(z)\in \mathcal{I}_{n+1}$.

Now as before, we argue that the pointwise stabilizer of $I^M_f$, the set $F_{I^M_f}=\{\phi\in \mathrm{Aut}(M):\forall m\in I^M_f ~(\phi(m)=m)\}$, is closed in $\mathrm{Aut}(M)$. So the preimage $e^{-1}[F_{I^M_f}]$ must be closed in $\mathrm{Aut}(\Rat)$. It follows that the set $\{f\in \mathrm{Aut}(\Rat): \forall q\in I^\Rat_f ~(f\notin \mathcal{N}^{\mathrm{Aut}(\Rat)}_{\{q_1,\ldots,q_n,q\}})\}\supseteq \mathrm{Aut}(\Rat)\setminus e^{-1}[F_{I^M_f}]$ must have a non-empty interior. However, as before, it is easy to check that it is nowhere dense. This contradiction finishes the proof of the claim.\\

We are now ready to finish the proof. At first, we construct by induction a sequence of nested (open) intervals $\mathcal{I}=I_0\supseteq I_1\supseteq I_2\supseteq \ldots$ with the following properties (for $n\geq 1$):
\begin{enumerate}
\item $I_n\subseteq I_{n-1}\cap (\mathcal{I}\setminus \mathcal{I}_n)$.
\item For every $z\in I_n$ there exists $f\in \mathcal{N}^{\mathrm{Aut}(\Rat)}_{\{q_1,\ldots,q_n\}}$ such that $e(f)(z)\neq z$.
\item If $z_1<z_2<z_3\in \mathcal{I}$ and $z_1,z_3\in I_1$, then also $z_2\in I_1$.
\item $I_n$ is maximal with respect to the properties above.

\end{enumerate}
It is clear that we can find such $I_1$. Suppose we have already chosen $I_1\supseteq\ldots\supseteq I_n$. The only possible reason why we could not continue would be that $I_n\subseteq \mathcal{I}_{n+1}$, i.e. we could not find an interval from $I_n\cap (\mathcal{I}\setminus \mathcal{I}_{n+1})$. We shall argue, using Claim \ref{helpclaim}, that that is not possible. We reach the contradiction by showing that $I_n$ then satisfies the requirements for the interval $I$ in the statement of Claim \ref{helpclaim}. Suppose that not. Then there must be $f\in \mathcal{N}^{\mathrm{Aut}(\Rat)}_{\{q_1,\ldots,q_n\}}$ such that that for some $z\in I_n$ we have $e(f)(z)\notin I_n$. Suppose that $e(f)(z)>I_n$ (i.e. $\forall y\in I_n ~(e(f)(z)>y)$), the other case is similar. Denote $e(f)(z)$ by $y$. However, by the property $(4)$, there exists $I_n<y'\leq y$ such that $y'\in \mathcal{I}_n$, i.e. $e(f)(y')=y'$. This is a contradiction.

So suppose we have constructed the sequence of such intervals. We now choose two sequences of points $y_1\leq y_2\leq\ldots <\ldots \leq z_2\leq z_1$ with the following property: there is some sequence of natural numbers $m_1<m_2<\ldots$ such that for every $n$ we have $y_n<I_{m_n}<z_n$ and $y_n,z_n\in I_{m_{n-1}}$. When this is done, since $M$ is $\kappa$-ultrahomogeneous, we can find an element $z_\infty\in M$ such that for every $n$ we have $y_n<z_\infty <z_n$. It follows that $z_\infty\in \bigcap_n I_n$. Consider now the open neighbourhood of the identity $\mathcal{N}=\mathcal{N}^{\mathrm{Aut}(M)}_{\{z_\infty\}}$. Since $e$ is continuous there exists $j$ such that $e[\mathcal{N}^{\mathrm{Aut}(\Rat)}_{\{q_1,\ldots,q_{m_j}\}}]\subseteq \mathcal{N}$. However, since $z_\infty\in I_{m_j}$ and $I_{m_j}\cap \mathcal{I}_{m_j}=\emptyset$, there exists $f\in \mathcal{N}^{\mathrm{Aut}(\Rat)}_{\{q_1,\ldots,q_{m_j}\}}$ such that $e(f)(z_\infty)\neq z_\infty$. This contradiction finishes the proof of the theorem.
\end{proof}
\begin{cor}
Let $M$ be one of the following structures: $\kappa$-universal and $\kappa$-ultrahomogeneous linear order, $\kappa$-universal and $\kappa$-ultrahomogeneous partial order or $\kappa$-universal and $\kappa$-ultrahomogeneous tournament. Then $\mathrm{Aut}(M)$ is not a universal topological group for the class $\mathcal{A}(M)$.
\end{cor}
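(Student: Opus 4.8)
The plan is to read the corollary off the previous Theorem: it suffices to check, in each of the three cases, that $M$ contains a substructure isomorphic to the $\kappa$-universal and $\kappa$-ultrahomogeneous linear order, which I denote by $L$. When $M$ is itself the $\kappa$-universal and $\kappa$-ultrahomogeneous linear order this is vacuous and the Theorem applies directly. When $M$ is the $\kappa$-universal and $\kappa$-ultrahomogeneous partial order, observe that $L$ is in particular a partial order of cardinality $\kappa$; every substructure of $L$ of cardinality $<\kappa$ is a linear order of cardinality $<\kappa$, hence a partial order of cardinality $<\kappa$, and therefore lies in $\mathrm{Age}_{<\kappa}(M)$. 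By the $\kappa$-universality of $M$ (a transfinite recursion of length $\kappa$ using its extension property) there is an embedding $L\hookrightarrow M$, whose image is a substructure of $M$ isomorphic to $L$ (a chain of $M$ of order type $L$). So the Theorem again yields that $\mathrm{Aut}(M)$ is not universal.

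The tournament is the only case needing an adjustment, and it is essentially the observation that a transitive tournament both carries and is determined by a linear order. Since a tournament lives in a different language than a linear order, in place of $L$ I work with the transitive tournament $\vec{L}$ on the underlying set of $L$, with an arc from $a$ to $b$ exactly when $a<_L b$. Then $\vec{L}$ is a tournament of cardinality $\kappa$, so by $\kappa$-universality it embeds as a substructure into the $\kappa$-universal and $\kappa$-ultrahomogeneous tournament $M$. Because the arc relation of $\vec{L}$ defines the order $<_L$, one has $\mathrm{Aut}(\vec{L})=\mathrm{Aut}(L,<)$, and the rational shifts $s_q$ featuring in the proof of the previous Theorem are automorphisms of the transitive tournament on $\Rat$; the order topology, the nested intervals $I_n$, and the realization of countable cuts used in that proof all refer only to this definable order. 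Hence the proof of the previous Theorem goes through verbatim with $\vec{L}$ and its arc relation in place of the linear order, and $\mathrm{Aut}(M)$ is again not universal.

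I do not anticipate a serious obstacle. The first two cases are formal consequences of $\kappa$-universality, and the third only requires the bookkeeping of transporting the argument of the previous Theorem across the reinterpretation $L\leftrightarrow\vec{L}$. The two points deserving a line of care are that $\kappa$-universality genuinely yields an embedding of the full size-$\kappa$ structure $L$ (where the extension property is invoked transfinitely rather than just finitely), and that passing to $\vec{L}$ leaves the automorphism group unchanged, so that the group-theoretic content of the previous Theorem really does transfer.
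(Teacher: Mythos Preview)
Your proposal is correct and follows the same approach as the paper: verify that each of the three structures contains (a copy of) the $\kappa$-universal and $\kappa$-ultrahomogeneous linear order as a substructure, and then invoke the preceding Theorem. The paper's own proof is a single sentence to this effect; your version simply spells out the verification in each case. Your extra care in the tournament case---passing to the transitive tournament $\vec L$ and noting that $\mathrm{Aut}(\vec L)=\mathrm{Aut}(L,<)$ so that the order-theoretic argument of the Theorem transfers intact---addresses a language mismatch that the paper glosses over, but does not constitute a different method.
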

\begin{proof}
All the structures from the statement contain the $\kappa$-universal and $\kappa$-ultrahomogeneous linear order as a substructure. So the corollary follows from the previous theorem.
\end{proof}
We conclude our list of discrete structures by an example having just operations and no relations, namely a group.

Let $\mathbb{G}_\kappa$ be the $\kappa$-universal and $\kappa$-ultrahomogeneous group generated by $\kappa$-many generators, i.e. every group with at most $\kappa$-many generators is isomorphic with some subgroup of $\mathbb{G}_\kappa$ and any partial isomorphism between two subgroups of $\mathbb{G}_\kappa$ generated by strictly less than $\kappa$-many generators extends to a full automorphism of $\mathbb{G}_\kappa$. This is a Fra\" iss\' e limit of the class of all groups generated by strictly less than $\kappa$ generators.
\begin{thm}
The group $\mathrm{Aut}(\mathbb{G}_\kappa)$ is not a universal topological group for the class $\mathcal{A}(\mathbb{G}_\kappa)$.
\end{thm}
\begin{proof}
The free group of countably many generators, denoted here by $\mathbb{F}_\infty$, clearly embeds into $\mathbb{G}_\kappa$. Also, one can check that $S_\infty\leq \mathrm{Aut}(\mathbb{F}_\infty)$, thus it suffices to prove that there is no continuous embedding of $S_\infty$ into $\mathrm{Aut}(\mathbb{G}_\kappa)$.

Suppose there is and let us denote it again by $e$. Let us enumerate the generators of $\mathbb{G}_\kappa$ as $\{g_\alpha:\alpha<\kappa\}$ in such a way that for every $\alpha<\beta<\kappa$ we have that $g_\beta\notin \langle\{g_\gamma:\gamma\leq \alpha\}\rangle$. By induction, we shall find two disjoint countable sets $\{f_n:n\in \Nat\},\{h_n:n\in \Nat\}\subseteq \mathbb{G}_\kappa$ such that 
\begin{enumerate}
\item For every $m\in \Nat$ there will be $p_m\in \mathcal{N}^{S_\infty}_{\{1,\ldots,m\}}$ and $n$ such that $e(p_m)(f_n)=h_n$.
\item There will be an element $g\in \mathbb{G}_\kappa$ such that $\forall n ~(g\cdot f_n=f_n\cdot g\wedge g\cdot h_n\neq h_n\cdot g)$.
\end{enumerate}
This suffices for reaching a contradiction. Indeed, let $\mathcal{N}=\mathcal{N}^{\mathrm{Aut}(\mathbb{G}_\kappa)}_{\{g\}}$. Then there must be $m\in \Nat$ such that $\mathcal{N}^{S_\infty}_{\{1,\ldots,m\}}\subseteq e^{-1}[\mathcal{N}]$. Then $e(p_m)\in \mathcal{N}$, thus $e(p_m)(g)=g$. However, since $e(p_m)(f_n)=h_n$ (for the appropriate $n$) and $f_n\cdot g=g\cdot f_n$, we must have $h_n\cdot g=e(p_m)(f_n\cdot g)=e(p_m)(g\cdot f_n)=g\cdot h_n$, a contradiction.\\

Let $\alpha_1=\min\{\alpha<\kappa: \exists p\in \mathcal{N}^{S_\infty}_{\{1\}}~ (e(p)(g_\alpha)\neq g_\alpha)\}$. Let $p_1\in \mathcal{N}^{S_\infty}_{\{1\}}$ be the corresponding element of $S_\infty$ such that $e(p_1)(g_{\alpha_1})\neq g_{\alpha_1}$. If $e(p_1)(g_{\alpha_1})\neq g_{\alpha_1}^k$, for some $k\in \Int$, then we set $f_1=g_{\alpha_1}$ and $h_1=e(p_1)(f_1)$. Otherwise, assuming without loss of generality that $\forall p\in S_\infty ~(e(p)(g_1)=g_1)$, i.e. $\alpha_1>1$, we set $f_1=g_{\alpha_1}\cdot g_1$ and $h_1=e(p_1)(f_1)$.

Suppose we have found the appropriate $f_1,h_1,\ldots,f_{l-1},h_{l-1}\in \mathbb{G}_\kappa$. Let $\alpha_l=\min \{\alpha<\kappa: \exists p\in \mathcal{N}^{S_\infty}_{\{1,\ldots,l\}} \forall i<l ~(e(p)(f_i)=f_i\wedge e(p)(h_i)=h_i\wedge e(p)(g_\alpha)\neq g_\alpha)\}$. Let $p_l$ be the corresponding element of $S_\infty$ such that $\forall i<l ~(e(p_l)(f_i)=f_i\wedge e(p_l)(h_i)=h_i\wedge e(p_l)(g_\alpha)\neq g_\alpha)$. If $e(p_l)(g_{\alpha_l})\notin \langle \{f_1,\ldots,f_{l-1},g_{\alpha_l}\}\rangle$, then we set $f_l=g_{\alpha_l}$ and $h_l=e(p_l)(f_l)$. Otherwise, we set $f_l=g_{\alpha_l}\cdot g_1$ and $h_l=e(p_l)(f_l)$.

When the induction is finished, we use the extension property of $\mathbb{G}_\kappa$ to find an element $g\in \mathbb{G}_\kappa$ satisfying  $\forall n ~(g\cdot f_n=f_n\cdot g\wedge g\cdot h_n\neq h_n\cdot g)$. This finishes the proof.
\end{proof}

We now provide a shorter and more direct proof of the result from \cite{MbPe} that $\mathrm{Iso}(\Ur_\kappa)$, for $\kappa^{<\kappa}=\kappa$, is not a universal topological group of weight $\kappa$. The proof is in the same spirit as the results above. Let us recall that a function $f:X\rightarrow \Rea^+$, where $X$ is a metric space, is called Kat\v etov if $\forall x,y\in X$ we have $|f(x)-f(y)|\leq d_X(x,y)\leq f(x)+f(y)$. The natural interpretation of such a function is to view it as a prescription of distances from some new imaginary point to the points of $X$. The generalized Urysohn space $\Ur_\kappa$ is characterized by the property that all Kat\v etov functions defined on subsets of $\Ur_\kappa$ of cardinality strictly less than $\kappa$ are realized by some point in $\Ur_\kappa$.
\begin{thm}
$S_\infty$ does not continuously embed into $\mathrm{Iso}(\Ur_\kappa)$. In particular, $\mathrm{Iso}(\Ur_\kappa)$ is not a universal topological group of weight $\kappa$.
\end{thm}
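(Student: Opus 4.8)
The plan is to run the same kind of argument as in the first theorem of the paper: assume for contradiction a continuous embedding $e\colon S_\infty\hookrightarrow\mathrm{Iso}(\Ur_\kappa)$, build countably many ``witnesses'' inside $\Ur_\kappa$, and then use the defining property of $\Ur_\kappa$ (that Kat\v etov functions on subsets of size $<\kappa$ are realized) together with the continuity of $e$ to force a contradiction. The one genuinely new point compared with the discrete case is that the witnesses must carry \emph{quantitative} separation data.

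By induction on $l$ I would produce elements $f_l\in\mathcal{N}^{S_\infty}_{\{1,\ldots,l\}}$, with images $\phi_l:=e(f_l)$, together with points $m_l,n_l\in\Ur_\kappa$ such that $n_l=\phi_l(m_l)$ and such that the countable configuration $\{m_l,n_l:l\in\Nat\}$ has bounded diameter, $d(m_l,n_l)\ge\varepsilon_0$ for every $l$, and the pairs are mutually $\varepsilon_0$-separated (in particular $d(m_l,n_{l'})\ge\varepsilon_0$ whenever $l\neq l'$), for one fixed constant $\varepsilon_0>0$ chosen in advance. At stage $l$ one works with the uncountable set $e[\mathcal{N}^{S_\infty}_{\{1,\ldots,l\}}]$ of isometries and with the homogeneity of $\Ur_\kappa$: since the fixed-point set of any nontrivial $\phi\in e[\mathcal{N}^{S_\infty}_{\{1,\ldots,l\}}]$ is closed, its complement is a nonempty open --- hence infinite --- set, so one can keep the newly chosen points away from the finitely many old ones, just as in the discrete argument.

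When the induction is finished, $\{m_l,n_l:l\in\Nat\}$ is a set of size $\aleph_0<\kappa$, and the function sending every $m_l$ to $r$ and every $n_l$ to $r+\varepsilon_0$, with $r$ chosen large relative to the diameter of the configuration, is Kat\v etov --- this is precisely where the separation built into the construction is needed. Hence it is realized by some $x\in\Ur_\kappa$, so $d(x,m_l)=r$ and $d(x,n_l)=r+\varepsilon_0$ for all $l$. Since $e$ is continuous, $e^{-1}[\mathcal{N}^{\mathrm{Iso}(\Ur_\kappa)}_{\{x\},\varepsilon_0}]$ is an open neighbourhood of the identity in $S_\infty$, so it contains some $\mathcal{N}^{S_\infty}_{\{1,\ldots,l\}}$, and then $\phi_l\in\mathcal{N}^{\mathrm{Iso}(\Ur_\kappa)}_{\{x\},\varepsilon_0}$, i.e. $d(\phi_l(x),x)<\varepsilon_0$. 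Because $\phi_l$ is an isometry with $\phi_l(m_l)=n_l$, a one-line triangle-inequality estimate yields $|d(x,m_l)-d(x,n_l)|\le d(\phi_l(x),x)<\varepsilon_0$, contradicting $|d(x,m_l)-d(x,n_l)|=\varepsilon_0$. The second assertion of the statement is then immediate, since $S_\infty$ is a topological group of weight $\aleph_0\le\kappa$.

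The step I expect to be the main obstacle is the inductive construction --- specifically, keeping $d(m_l,n_l)\ge\varepsilon_0$ for a single $\varepsilon_0$ independent of $l$ while also keeping the configuration bounded and the pairs mutually separated. In the first theorem mere uncountability of $\mathcal{N}^{S_\infty}_{\{1,\ldots,l\}}$ sufficed, since in a discrete structure an automorphism that ``moves nothing new'' fixes everything outside a fixed finite set, and the group of such automorphisms is countable. In $\mathrm{Iso}(\Ur_\kappa)$ there may be continuum-many isometries of arbitrarily small displacement concentrated near a fixed finite set, so uncountability of $e[\mathcal{N}^{S_\infty}_{\{1,\ldots,l\}}]$ by itself gives no uniform lower bound on displacement. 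Getting around this should use more of the structure: the homogeneity of $\Ur_\kappa$ (together with the fact that $\Ur_\kappa$ is homothetic to its rescalings, so that ``small'' isometry groups are not genuinely smaller) and the rigidity of $S_\infty$ --- every basic open subgroup $\mathcal{N}^{S_\infty}_{\{1,\ldots,l\}}$ is again a copy of $S_\infty$, so the images of these subgroups cannot become uniformly small as $l\to\infty$.
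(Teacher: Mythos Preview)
Your overall framework matches the paper's proof almost exactly: build countably many witness pairs $(m_l,n_l)$ with $n_l=e(f_l)(m_l)$ and uniform displacement at least $\varepsilon_0$, realize a Kat\v etov function separating the $m$'s from the $n$'s by a point $x$, and contradict continuity at $x$. The endgame and the Kat\v etov construction are essentially the paper's.

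The genuine gap is exactly the one you flag: obtaining a \emph{single} $\varepsilon_0>0$ with $d(m_l,e(f_l)(m_l))\ge\varepsilon_0$ for all $l$. Your suggested fixes do not work. The homothety of $\Ur_\kappa$ is irrelevant, since the embedding $e$ is fixed once and for all; rescaling $\Ur_\kappa$ rescales $e$ with it and gains nothing. The observation that each $\mathcal{N}^{S_\infty}_{\{1,\ldots,l\}}$ is again a copy of $S_\infty$ is true but does not by itself preclude the images $e[\mathcal{N}^{S_\infty}_{\{1,\ldots,l\}}]$ from having sup-displacement tending to $0$ as $l\to\infty$. The paper's argument here is different and uses a specific structural fact about $S_\infty$: if no such $\varepsilon$ existed, then for every $\delta>0$ some $e[\mathcal{N}^{S_\infty}_{\{1,\ldots,n_\delta\}}]$ would lie in the $\delta$-ball of the sup-metric $d_{\sup}$ on $\mathrm{Iso}(\Ur_\kappa)$, which forces $d_{\sup}\upharpoonright e[S_\infty]$ to be a \emph{compatible} metric on $e[S_\infty]\cong S_\infty$. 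Since $d_{\sup}$ is two-sided invariant, this contradicts the fact that $S_\infty$ admits no compatible complete left-invariant metric.

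There is a second, smaller gap. Achieving the mutual separation $d(m_l,n_{l'})\ge\varepsilon_0$ inductively, while simultaneously keeping $d(m_l,n_l)\ge\varepsilon_0$ and the diameter bounded, is not as easy as ``choose the new point away from the old ones'': moving $m_l$ to avoid earlier $n_{l'}$'s may destroy the displacement lower bound, and you have no control over where $n_l=\phi_l(m_l)$ lands relative to earlier $m_{l'}$'s. The paper does not attempt this inductively. Instead it first produces the sequence with only $\varepsilon\le d(x_n,y_n)\le 2\varepsilon$ (the upper bound obtained by sliding along a geodesic from a moved point toward a fixed reference point), then shows the whole configuration can be taken to have bounded diameter, and finally applies the infinite Ramsey theorem to pass to a subsequence along which all cross-distances $d(x_{n_i},y_{n_j})$ exceed $\varepsilon/4$. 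Your sketch is missing both the boundedness reduction and the Ramsey step.
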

\begin{proof}
Suppose it does and let $e:S_\infty\hookrightarrow \mathrm{Iso}(\Ur_\kappa)$ be the continuous embedding. First we claim that that there exist $0<\varepsilon$, $n_0\in \Nat$ and elements $f_n\in \mathcal{N}^{S_\infty}_{\{1,\ldots,n\}}$, $x_n\in \Ur_\kappa$, for every $n\geq n_0$, such that $\forall n\geq n_0 ~(\varepsilon\leq d(x_n,e(f_n)(x_n))\leq 2\varepsilon)$.

Let us first argue for the lower bound. If there were no such $\varepsilon$ then for every $\delta>0$ there would exist $n_\delta\in \Nat$ such that for every $f\in \mathcal{N}^{S_\infty}_{\{1,\ldots,n_\delta\}}$ we would have $d_{\sup}(e(f),\mathrm{id})<\delta$, where $d_{\sup}$ is the supremum metric on $\mathrm{Iso}(\Ur_\kappa)$ (not compatible with the standard topology); i.e. $\forall x\in \Ur_\kappa ~(d(x,e(f)(x))<\delta)$. However, one could then argue that the topology on $e[S_\infty]$ induced by $d_{\sup}\upharpoonright e[S_\infty]$ agrees with the standard topology on $e[S_\infty]$ and is induced by a two-sided invariant metric $d_{\sup}\upharpoonright e[S_\infty]$, which is a contradiction (recall that $S_\infty$ does not admit a compatible complete left-invariant metric; see \cite{Gao} for example). So let us fix $\varepsilon>0$ such that for every $n$ there exist $f_n\in \mathcal{N}^{S_\infty}_{\{1,\ldots,n\}}$ and $x'_n\in \Ur_\kappa$ such that $d(x'_n,e(f_n)(x'_n))>\varepsilon$. Clearly, for every $z\in \Ur_\kappa$ if $n$ is large enough then $d(z,e(f_n)(z))<2\varepsilon$. That follows from the continuity of the embedding $e$. Now for every large enough $n$ (greater than some $n_0$) we can find the desired $x_n\in \Ur_\kappa$, i.e. $\varepsilon\leq d(x_n,e(f_n)(x_n))\leq 2\varepsilon$, somewhere on the geodesic segment connecting $z$ and $x'_n$. For every $n\geq n_0$, let us denote $e(f_n)(x_n)$ by $y_n$.

We now find an infinite subset $\{n_0,n_1,\ldots\}\subseteq \Nat\setminus \{1,\ldots,n_0-1\}$ so that we can define a Kat\v etov function $F:\{x_{n_i},y_{n_i}:i\in \Nat\}\rightarrow \Rea^+$ such that $\forall i ~(|F(x_{n_i})-F(y_{n_i})|\geq\varepsilon/4)$. The statement of the theorem then follows. Indeed, it follows that there exists an element $x_F\in \Ur_\kappa$ realizing $F$. We then consider the open neighbourhood of the identity $\mathcal{N}=\mathcal{N}^{\mathrm{Iso}(\Ur_\kappa)}_{\{x_F\},\varepsilon/4}$. Since $e$ is continuous there exists $i$ such that $e[\mathcal{N}^{S_\infty}_{\{1,\ldots,n_i\}}]\subseteq \mathcal{N}$. It follows that $d(x_F,e(f_{n_i})(x_F))<\varepsilon/4$. Since $e(f_{n_i})$ is an isometry we have $d(x_F,x_{n_i})=d(e(f_{n_i})(x_F),y_{n_i})$, thus $|d(x_F,x_{n_i})-d(x_F,y_{n_i})|=|F(x_{n_i})-F(y_{n_i})|<\varepsilon/4$, a contradiction. It remains to find such an infinite subset.\\

We claim that we may assume that the set $\{x_i,y_i:i\geq n_0\}$ has bounded diameter. Suppose that not. Then we shall find another sequence $(z_n)_{n\geq n_0}$ with bounded diameter and such that for every $n\geq n_0$ we have $\varepsilon\leq d(z_n,e(f_n)(z_n))\leq 2\varepsilon$. Set $z_{n_0}=x_{n_0}$. For any $n>n_0$, we may suppose that $d(z_{n_0},x_n)>3\varepsilon$, and also that $d(z_{n_0},e(f_n)(z_{n_0}))\leq 2\varepsilon$ since this must hold true for $n$ large enough because of continuity of $e$. Let again $n>n_0$. If $d(z_{n_0},e(f_n)(z_{n_0}))\geq \varepsilon$ then we set $z_n=z_{n_0}$. Otherwise, since $d(z_{n_0},x_n)=d(e(f_n)(z_{n_0}),y_n)$ we must have $|d(z_{n_0},x_n)-d(z_{n_0},y_n)|<\varepsilon$. Using the extension property of $\Ur_\kappa$ we can find an element $z_n\in \Ur_\kappa$ such that $d(z_n,x_n)=d(z_{n_0},x_n)$, $d(z_n,y_n)=d(z_{n_0},x_n)-\varepsilon$ and $d(z_n,z_{n_0})=2\varepsilon$. It is easily checked that all triangle inequalities are satisfied. However, then we have that $d(z_n,x_n)=d(e(f_n)(z_n),y_n)$ and since  $d(z_n,y_n)=d(z_{n_0},x_n)-\varepsilon$ we must have $d(z_n,e(f_n)(z_n))\geq \varepsilon$. If $d(z_n,e(f_n)(z_n))>2\varepsilon$ then replace $z_n$ by an element $z'_n$ lying on the geodesic segment connecting $z_{n_0}$ and $z_n$ so that $\varepsilon\leq d(z'_n,e(f_n)(z'_n))\leq 2\varepsilon$.\\

Thus we now assume that $\{x_i,y_i:i\geq n_0\}$ has bounded diameter $D$. Using the Ramsey theorem we can further refine this set to the set $\{x_{n_i},y_{n_i}:i\geq 0\}$, for some subsequence $n_0<n_1<\ldots$, so that for every $i$ there is no $j$ such that $d(x_{n_i},y_{n_j})\leq \varepsilon/4$ (define a function $\rho:[\Nat]^2\rightarrow \{0,1\}$ such that $\rho(i,j)=0$ iff $d(x_{n_i},y_{n_j})>\varepsilon/4$ and $d(x_{n_j},y_{n_i})>\varepsilon/4$, then use the Ramsey theorem to find the homogeneous set in $0$, observe that there cannot be an infinite homogeneous set in $1$). Now, let $F(x_{n_i})=2D$, for every $i$, and we put $F(y_{n_i})=\min\{F(x_{n_j})+d(x_{n_j},y_{n_i}):j\in \Nat\}$ for every $i$. It is easy to check that $F$ is as desired. 
\end{proof}

\end{document}